\newcommand{\lk}{\left(}
\newcommand{\rk}{\right)}
\newcommand{\tr}{\textnormal{Tr}}
\newcommand{\R}{\mathbb{R}}
\newcommand{\Le}{L_d}
\newcommand{\Lz}{\frac 14 L_{d-1}}
\begin{document}

\title{TWO-TERM SPECTRAL ASYMPTOTICS FOR THE DIRICHLET LAPLACIAN ON A BOUNDED DOMAIN}

\author{RUPERT L. FRANK}

\address{Department of Mathematics, Princeton University\\
Washington Road, Princeton, NJ 08544, USA\\
E-mail: rlfrank@math.princeton.edu}

\author{LEANDER GEISINGER}

\address{Fachbereich Mathematik und Physik, Universit\"at Stuttgart\\
Pfaffenwaldring 57, 70569 Stuttgart, Germany\\
E-mail: geisinger@mathematik.uni-stuttgart.de}

\begin{center}
\begin{small}
 \emph{Dedicated to Ari Laptev on the occasion of his 60th birthday.}
\end{small}
\end{center}

\begin{abstract}
Let $-\Delta$ denote the Dirichlet Laplace operator on a bounded open set in $\mathbb{R}^d$. We study the sum of the negative eigenvalues of the operator $-h^2 \Delta - 1$ in the semiclassical limit $h \to 0+$. We give a new proof that yields not only the first term of the asymptotic formula but also the second term involving the surface area of the boundary of the set.
The proof is valid under weak smoothness assumptions on the boundary.
\end{abstract}

\keywords{Dirichlet Laplace operator; Semiclassical limit; Weyl's law.}

\bodymatter

\section{Introduction and main result}
\label{sec:int}


\subsection{Introduction}

Let $\Omega$ be a bounded open set in $\R^d$, $d\geq 2$. We consider the Dirichlet Laplace operator $-\Delta_\Omega$ defined as a self-adjoint operator in $L^2(\Omega)$ generated by the form
$$
(v,-\Delta_\Omega v) \, = \, \int_\Omega |\nabla v(x)|^2 dx
$$
with form domain $H_0^1(\Omega)$. Since $\Omega$ is bounded the embedding of $H_0^1(\Omega)$ into $L^2(\Omega)$ is compact and the spectrum of $-\Delta_\Omega$ is discrete. It consists of a series of positive eigenvalues
$0 < \lambda_1 \leq \lambda_2 \leq \dots$ accumulating at infinity only.

In general, the eigenvalues $\lambda_k$ cannot be calculated explicitly and especially for large $k$ it is difficult to evaluate them numerically. Therefore it is interesting to describe the  asymptotic behavior of $\lambda_k$ as $k \to \infty$. This is equivalent to the asymptotics of the negative eigenvalues of the operator
\hrule
\bigskip
\begin{footnotesize}
\copyright\, 2010 by the authors. This paper may be reproduced, in its 
entirety, for non-commercial purposes.
\end{footnotesize}
\pagebreak
$$
H_\Omega \, = \, -h^2 \Delta_\Omega - 1
$$
in the semiclassical limit $h \to 0+$.

The first general result is due to H. Weyl who studied the counting function 
$$
N_\Omega(h) \, = \, \sharp \{ \lambda_k < h^{-2} \} = \tr\lk H_\Omega \rk_-^0 \, .
$$
In 1912 he showed that the first term of its semiclassical limit is given by the phase-space volume \cite{Weyl12}: For any open bounded set $\Omega \subset \R^d$ the limit
$$
N_\Omega(h) \, = \, C_d \, |\Omega| \, h^{-d} + o(h^{-d}) 
$$
holds as $h \to 0+$, where
$$
C_d \, = \, \frac{1}{(2\pi)^d} \int_{\R^d} (|p|^2 -1)^0_- dp \, = \, \frac{\omega_d}{(2\pi)^d}
$$
and $\omega_d$ denotes the volume of the unit ball in $\R^d$.

H. Weyl conjectured in \cite{Weyl13} that this formula can be refined by a second term of order $h^{-d+1}$ depending on the boundary of $\Omega$. This stimulated a detailed analysis of the semiclassical limit of partial differential operators. We refer to the books \cite{Hoerma85,SafVas97,Ivrii98} for general results and an overview over the literature. 
Eventually, the existence of a second term was proved by V. Ivrii by means of a detailed microlocal analysis \cite{Ivrii80}: If the boundary of $\Omega$ is smooth and if the measure of all periodic geodesic billiards is zero then the limit
\begin{equation}
\label{eq:int:ivrii}
N_\Omega(h) \, = \, C_d \, |\Omega| \, h^{-d} - \frac 14 C_{d-1} \, |\partial \Omega| \, h^{-d+1} + o(h^{-d+1})
\end{equation}
holds as $h \to 0+$, where $|\partial \Omega|$ denotes the surface area of the boundary. 

In this article we are interested in the sum of the negative eigenvalues
$$
\tr (H_\Omega)_- \, = \, \sum (h^2 \lambda_k-1)_- \, .
$$
This quantity describes the energy of non-interacting, fermionic particles trapped in $\Omega$ and plays an important role in physical applications.

The asymptotic relation (\ref{eq:int:ivrii}) immediately implies a refined formula for the semiclassical limit of $\tr (H_\Omega)_-$:
Suppose that the aforementioned geometric conditions on $\Omega$ are satisfied. Then integrating (\ref{eq:int:ivrii}) yields
\begin{equation}
\label{eq:int:trace}
\tr (H_\Omega)_- \, = \, L_d \, |\Omega| \, h^{-d} - \frac 14 L_{d-1} |\partial \Omega| h^{-d+1} + o(h^{-d+1})
\end{equation}
as $h \to 0+$, with 
$$
L_d \, = \, \int_{\R^d} (|p|^2 -1)_- dp \, = \, \frac{2}{d(d+2)}  \frac{\omega_d}{(2\pi)^d} \, .
$$

In the following we present a direct approach to derive the semiclassical limit of $\tr (H_\Omega)_-$. We prove (\ref{eq:int:trace}) without using the result for the counting function. Since we do not apply any microlocal methods the proof works under much weaker conditions.


\subsection{Main Result}

Our main result holds without any global geometric conditions on $\Omega$. We only require weak  smoothness conditions on the boundary - namely that the boundary belongs to the class $C^{1,\alpha}$ for some $\alpha > 0$. That means, we assume that the local charts of $\Omega$ are differentiable and the derivatives are H\"older continuous with exponent $\alpha$.

\begin{theorem}
\label{thm:main}
Let the boundary of $\Omega$ satisfy $\partial \Omega \in C^{1,\alpha}$, $0 < \alpha \leq 1$.
Then the asymptotic limit
$$
\tr (H_\Omega)_- \, = \, \Le \, |\Omega| \, h^{-d} - \Lz \, |\partial \Omega| \, h^{-d+1} + O \lk h^{-d+1+\alpha/(2+\alpha)} \rk
$$
holds as $h \to 0+$.
\end{theorem}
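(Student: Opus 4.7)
\medskip\noindent\textbf{Proof proposal.}
My plan is to localize the trace on a single mesoscopic scale $\ell=\ell(h)$ and assemble the answer from two model computations: the free $\R^d$ for pieces away from the boundary, and the flat Dirichlet half-space for pieces near $\partial\Omega$. Take a smooth partition of unity $\{\phi_j^2\}_{j\in J}$ with $\sum_j\phi_j^2\equiv 1$ on $\Omega$, each $\phi_j$ supported in a ball of radius $\ell$ and $|\nabla\phi_j|\leq C\ell^{-1}$, and split $J=J_{\mathrm{int}}\cup J_{\mathrm{bd}}$ according to whether the support of $\phi_j$ lies at distance at least $2\ell$ from $\partial\Omega$ or not. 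The IMS localization identity
$$
-h^2\Delta \,=\, \sum_{j} \phi_j(-h^2\Delta)\phi_j \,-\, h^2\sum_{j}|\nabla\phi_j|^2,
$$
together with the operator monotonicity of $A\mapsto\tr A_-$ and the elementary shift $\tr(A-c)_-\leq\tr A_- + c\,N(A\leq c)$ combined with the crude Weyl bound $N(H_\Omega\leq 0)=O(h^{-d})$, reduces the problem up to an additive error $O(h^{-d+2}\ell^{-2})$ to estimating $\sum_j\tr(\phi_j H_\Omega\phi_j)_-$.

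For an interior index $j\in J_{\mathrm{int}}$ the operator $\phi_j H_\Omega\phi_j$ effectively lives on $\R^d$, and the standard coherent-state / Berezin--Li--Yau argument gives $\tr(\phi_j H_\Omega\phi_j)_-=\Le h^{-d}\int\phi_j^2\,dx$ with error of lower order. For a boundary index $j\in J_{\mathrm{bd}}$ I use the $C^{1,\alpha}$ chart to straighten $\partial\Omega$, after which the restriction of $H_\Omega$ is the Dirichlet Laplacian on a portion of the half-space $\{y_d>0\}$ with a metric perturbation of size $O(\ell^\alpha)$ coming from the H\"older continuity of $\nabla\varphi$. The key explicit computation for the flat half-space is the diagonal density
$$
\rho_{\mathrm{hs}}(y_d,h) \,=\, \Le h^{-d} \,-\, \int_{\R^d}\frac{dp}{(2\pi)^d}\cos(2 p_d y_d)\,(h^2|p|^2-1)_-,
$$
obtained by Fourier transform in $y'$ and the Dirichlet half-line spectral resolution. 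The integral of $\rho_{\mathrm{hs}}(y_d,h)-\Le h^{-d}$ over $y_d\in(0,\infty)$ evaluates to exactly $-\Lz h^{-d+1}$ per unit tangential area, so summing over $j\in J_{\mathrm{bd}}$ reconstructs the surface term $-\Lz h^{-d+1}|\partial\Omega|$.

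The two remainders are then the IMS localization error $O(h^{-d+2}\ell^{-2})$ and the boundary-straightening error $O(h^{-d+1}\ell^\alpha)$ coming from replacing $H_\Omega$ by its flat model on each boundary patch; balancing $h^2\ell^{-2}\asymp\ell^\alpha$ yields $\ell=h^{2/(2+\alpha)}$ and the claimed $O(h^{-d+1+\alpha/(2+\alpha)})$ remainder. The main obstacle I foresee is the straightening estimate: one must control the change of $\tr(\cdot)_-$ under a metric perturbation of relative size $\ell^\alpha$ with no extra loss in $h$. I would handle it by writing the difference of the two Dirichlet quadratic forms as a relatively form-bounded perturbation of size $O(\ell^\alpha)$ and invoking a Birman--Schwinger / Lieb--Thirring type remainder bound at energy $h^{-2}$, so that exactly the H\"older exponent of $\nabla\varphi$ (and not full $C^2$ regularity) is consumed.
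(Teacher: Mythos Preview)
Your overall architecture---IMS localization, free model in the bulk, half-space model near $\partial\Omega$ via a $C^{1,\alpha}$ straightening---is exactly the paper's. But the bookkeeping does not close, and the reason is structural, not cosmetic.

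The decisive gap is the single fixed scale $\ell$. With a uniform partition the IMS gradient term is $\sum_j|\nabla\phi_j|^2\asymp\ell^{-2}$ everywhere on $\Omega$, so the localization error you correctly record is $O(h^{-d+2}\ell^{-2})$. The paper instead uses a \emph{variable} scale $l(u)\asymp\max(d(u),l_0)$, so that $\int_\Omega l(u)^{-2}\,du=O(l_0^{-1})$ rather than $O(l_0^{-2})$; this brings the localization error down to $O(h^{-d+2}l_0^{-1})$. That single power of $l_0$ is what makes the stated remainder attainable. With your fixed $\ell$ and the correct straightening error (see below), the optimal balance is $\ell=h^{2/(\alpha+3)}$ and the remainder is only $O\bigl(h^{-d+2(\alpha+1)/(\alpha+3)}\bigr)$, strictly worse than $O\bigl(h^{-d+1+\alpha/(2+\alpha)}\bigr)$; for $\alpha=1$ this is merely $O(h^{-d+1})$ and you do not even isolate the surface term.

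Secondly, your straightening error $O(h^{-d+1}\ell^\alpha)$ is too optimistic for the argument you sketch. A relative form bound of size $\ell^\alpha$ on $-\Delta$ translates, on each boundary ball, into a bounded perturbation of $H_\Omega$ of size $O(\ell^\alpha)$ on the relevant spectral window, and hence an error $\ell^\alpha$ times the local \emph{leading} count $\ell^d h^{-d}$, not the subleading one. Summed over the $O(\ell^{-(d-1)})$ boundary balls this gives $O(\ell^{\alpha+1}h^{-d})$, which is what the paper obtains and uses. (Your balancing line ``$h^2\ell^{-2}\asymp\ell^\alpha$'' is also inconsistent with the two errors you wrote just before it; equating $h^{-d+2}\ell^{-2}$ with $h^{-d+1}\ell^\alpha$ gives $\ell=h^{1/(2+\alpha)}$, not $h^{2/(2+\alpha)}$.) The fix is not a sharper perturbation lemma but the multiscale localization: once $l(u)$ grows linearly away from $\partial\Omega$, the bulk localization cost integrates to $l_0^{-1}$ and the three boundary errors $l_0^{-1}h^{-d+2}$, $l_0^{2\alpha}h^{-d+1}$, $l_0^{\alpha+1}h^{-d}$ balance at $l_0\asymp h^{2/(2+\alpha)}$ to give the claimed exponent.
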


Our work was stimulated by the question whether similar two-term formulae hold for non-local, non-smooth operators. This is unknown, since the microlocal methods leading to (\ref{eq:int:ivrii}) are not applicable. Therefore it is  necessary to use a direct approach.

Indeed, Theorem \ref{thm:main} can be extended to fractional powers of the Dirichlet Laplace operator \cite{FraGei10}. The strategy of the proof is similar but dealing with non-local operators is more difficult and elaborate. In order to give a flavor of our techniques we confine ourselves in this article to the local case.

The question whether the second term of the semiclassical limit of $\tr(H_\Omega)_-$ exists for Lipschitz domains $\Omega$ remains open.

\subsection{Strategy of the proof}

The proof of Theorem \ref{thm:main} is divided into three steps: First, we localize the operator $H_\Omega$ into balls, whose size varies depending on the distance to the complement of $\Omega$. Then we analyze separately the semiclassical limit in the bulk and at the boundary.

To localize, let $d(u) = \inf \{|x-u| \, : \,  \, x \notin \Omega  \}$ denote the distance of $u \in \R^d$ to the complement of $\Omega$. We set 
$$
l(u) \, = \, \frac 12 \lk 1 + \lk d(u)^2 + l_0^2 \rk^{-1/2} \rk^{-1} \, , 
$$
where  $0 < l_0 \leq 1$ is a parameter depending only on $h$. Indeed, we will finally choose $l_0$ proportional to $h^{2/(\alpha+2)}$.

In Section \ref{sec:loc} we introduce real-valued functions $\phi_u \in C_0^\infty(\R^d)$ with support in the ball $B_u = \{ x \in \R^d \, : \, |x-u| < l(u) \}$. For all $u \in \R^d$ these functions satisfy
\begin{equation}
\label{eq:int:grad}
\left\| \phi_u \right\|_\infty \, \leq \, C \ ,  \qquad \left\| \nabla \phi_u \right\|_\infty \leq C \, l(u)^{-1}
\end{equation}
and for all $x \in \R^d$
\begin{equation}
\label{eq:int:unity}
\int_{\R^d} \phi_u^2(x) \, l(u)^{-d} \, du \, = \, 1 \, .
\end{equation}
Here and in the following the letter $C$ denotes various positive constants that might depend on $\Omega$, but that are independent of $u$, $l_0$ and $h$. 

\begin{proposition}
\label{pro:loc}
For $0 < l_0 \leq 1$ and $h > 0$ we have
$$
\left| \textnormal{Tr} (H_\Omega)_- - \int_{\R^d} \textnormal{Tr} \lk \phi_u H_\Omega \phi_u \rk_- l(u)^{-d} \, du \right| \, \leq \, C \, l_0^{-1} \, h^{-d+2} \, .
$$
\end{proposition}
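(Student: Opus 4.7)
My plan is to combine an IMS-type localization formula for the continuous partition of unity $\{\phi_u\}$ with the variational characterization of $\tr(\cdot)_-$. A direct computation yields the commutator identity $[\phi_u,[\phi_u,-\Delta]]=-2|\nabla\phi_u|^2$, which together with (\ref{eq:int:unity}) produces the operator identity
$$
\int_{\R^d}\phi_u H_\Omega \phi_u\, l(u)^{-d}\, du \ = \ H_\Omega \ + \ W, \qquad W(x) \ := \ h^2\int_{\R^d}|\nabla\phi_u(x)|^2\, l(u)^{-d}\, du \ \geq \ 0.
$$

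Two-sided bounds on $\tr(H_\Omega)_-$ then follow from the Gibbs principle $\tr(A)_- = -\min_{0\leq\gamma\leq I}\tr(A\gamma)$. For the lower bound, I would take as trial density $\gamma := \int \phi_u \Pi_u \phi_u \, l(u)^{-d}\, du$, where $\Pi_u$ is the projection onto the negative spectral subspace of $\phi_u H_\Omega \phi_u$: condition (\ref{eq:int:unity}) ensures $\gamma \leq I$, and cyclicity of the trace gives $\tr(H_\Omega)_- \geq \int \tr(\phi_u H_\Omega \phi_u)_-\, l(u)^{-d}\, du$. For the matching upper bound, evaluate $\tr(H_\Omega)_- = -\tr(H_\Omega\gamma^*)$ at $\gamma^*:=\mathbf{1}(H_\Omega<0)$ via the operator identity, and apply the elementary estimate $-\tr(X\gamma^*)\leq\tr X_-$ (valid for $0\leq\gamma^*\leq I$) inside the $u$-integral, obtaining
$$
\tr(H_\Omega)_- \ \leq \ \int \tr(\phi_u H_\Omega \phi_u)_- \, l(u)^{-d}\, du \ + \ \tr(W\gamma^*).
$$

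It remains to show $\tr(W\gamma^*) \leq Cl_0^{-1}h^{-d+2}$. The key input is the pointwise density bound $\rho(x):=\gamma^*(x,x) \leq Ch^{-d}$, which follows from the operator inequality $\gamma^* \leq e\cdot e^{-h^2(-\Delta_\Omega)}$ combined with domain monotonicity of the heat kernel, $e^{h^2\Delta_\Omega}(x,x) \leq (4\pi h^2)^{-d/2}$. Given this, $\tr(W\gamma^*) \leq Ch^{-d}\int_\Omega W\, dx$, and using (\ref{eq:int:grad}) together with the support of $\phi_u$ yields $\int|\nabla\phi_u|^2\, dx \leq Cl(u)^{d-2}$, so the problem reduces to estimating $\int l(u)^{-2}\, du$ over the set where $B_u$ meets $\Omega$. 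Splitting according to whether $d(u)\geq l_0$ (where $l(u)$ is bounded below, contributing $O(|\Omega|)$) or $d(u)<l_0$ (a boundary layer of volume $O(l_0|\partial\Omega|)$ on which $l(u)\sim l_0$, contributing $O(l_0^{-1})$) gives the claimed bound.

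The main obstacle I anticipate is less any single step than the bookkeeping of the geometric layer integral: the estimate of $\int l(u)^{-2}\, du$ must yield exactly $O(l_0^{-1})$ with constants depending only on $\Omega$, since this is what calibrates the later choice $l_0 \sim h^{2/(\alpha+2)}$ against the semiclassical factor $h^{-d+2}$ to produce the sharp surface-order remainder of Theorem \ref{thm:main}.
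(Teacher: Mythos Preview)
Your proposal is correct and the ``easy'' direction $\tr(H_\Omega)_-\ge\int\tr(\phi_u H_\Omega\phi_u)_-\,l(u)^{-d}\,du$ matches the paper exactly (same trial density $\gamma=\int\phi_u\Pi_u\phi_u\,l(u)^{-d}\,du$). For the reverse inequality, however, you take a genuinely different route. The paper absorbs the localization potential into the localized operators via the pointwise bound $\int(\nabla\phi_u)^2\,l(u)^{-d}\,du\le C\int\phi_u^2\,l(u)^{-d-2}\,du$, obtaining $\tr(H_\Omega)_-\le\int\tr\bigl(\phi_u(H_\Omega-Ch^2l(u)^{-2})\phi_u\bigr)_-\,l(u)^{-d}\,du$, and then uses the Berezin--Lieb--Li--Yau inequality (Lemma~\ref{lem:berezin}) together with a parameter $\rho_u\sim h^2l(u)^{-2}$ to split off the error. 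Your approach instead leaves $W$ as a separate term and controls $\tr(W\gamma^*)$ by the pointwise density bound $\gamma^*(x,x)\le Ch^{-d}$ coming from the heat kernel comparison $\gamma^*\le e\cdot e^{h^2\Delta_\Omega}$. Both reduce the error to $Ch^{-d+2}\int_{\Omega^*}l(u)^{-2}\,du$; your argument is arguably cleaner in that it avoids the $\rho_u$ splitting, at the cost of importing the Dirichlet heat kernel bound as an outside ingredient.

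One correction to your sketch of the layer integral: on the region $\{d(u)\ge l_0\}$ the function $l(u)$ is \emph{not} bounded below by an absolute constant --- near $d(u)=l_0$ one has $l(u)\sim l_0$, and more generally $l(u)\sim\min(d(u),1)$. The contribution of $\{l_0\le d(u)\le 1\}$ is therefore $\int d(u)^{-2}\,du\sim|\partial\Omega|\int_{l_0}^1 t^{-2}\,dt\sim l_0^{-1}$ by the co-area formula, not $O(|\Omega|)$. Only the far region $\{d(u)\ge 1\}$ gives $O(|\Omega|)$. This does not damage the final bound, which is still $O(l_0^{-1})$, but the dominant contribution comes from the intermediate zone rather than the boundary layer alone.
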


In view of this result, one can analyze the local asymptotics, i. e., the asymptotic behavior of  $\tr(\phi_u H_\Omega \phi_u)_-$ separately on different parts of $\Omega$. First, in the bulk, where the influence of the boundary is not felt.

\begin{proposition}
\label{pro:bulk}
Assume that $\phi \in C_0^\infty(\Omega)$ is supported in a ball of radius $l > 0$ and that 
\begin{equation}
\label{eq:int:gradphi}
\| \nabla \phi \|_\infty \, \leq \, C \, l^{-1}
\end{equation}
is satisfied. Then for $h > 0$ the estimate
$$
\left| \tr \lk \phi H_{\Omega} \phi \rk_- - \Le \int_\Omega \phi^2(x) \, dx \, h^{-d} \right| \, \leq \, C \, l^{d-2} \, h^{-d+2}
$$
holds, with a constant depending only on the constant in (\ref{eq:int:gradphi}).
\end{proposition}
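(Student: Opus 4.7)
The plan is to reduce the estimate to the translation-invariant free operator $H_0 = -h^2 \Delta - 1$ on $L^2(\R^d)$, then to establish matching lower and upper bounds on $\tr(\phi H_0 \phi)_-$ by testing against well-chosen trial states. The reduction is immediate: since $\phi \in C_0^\infty(\Omega)$ has compact support strictly inside $\Omega$, the quadratic forms of $\phi H_\Omega \phi$ on $L^2(\Omega)$ and of $\phi H_0 \phi$ on $L^2(\R^d)$ agree on the natural common subspace, while $\phi H_0 \phi$ vanishes on $L^2(\R^d \setminus \Omega)$; therefore $\tr(\phi H_\Omega \phi)_- = \tr(\phi H_0 \phi)_-$, and I work with $H_0$ from here on.

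For the lower bound I would use the variational characterization $\tr A_- = -\inf_{0 \leq \gamma \leq I}\tr(A\gamma)$ and test against $\gamma = \chi_{(-\infty,0)}(H_0)$, i.e., the indicator in Fourier of the ball $\{|hp| < 1\}$, which commutes with $H_0$. The IMS identity
\begin{equation*}
\phi H_0 \phi \, = \, \tfrac{1}{2}(\phi^2 H_0 + H_0 \phi^2) + h^2 |\nabla \phi|^2,
\end{equation*}
a consequence of the double commutator $[[-h^2\Delta, \phi], \phi] = -2h^2|\nabla\phi|^2$, then reduces $\tr(\phi H_0 \phi \gamma)$ by cyclicity to $\tr(\phi^2 H_0 \gamma) + h^2\tr(|\nabla \phi|^2 \gamma)$. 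The integral kernels of $H_0 \gamma$ and of $\gamma$ have constant diagonals equal to $-L_d h^{-d}$ and $C h^{-d}$ respectively, so the first piece equals $-L_d h^{-d}\int\phi^2\, dx$ and the second is at most $C h^2 \cdot h^{-d}\|\nabla\phi\|_\infty^2 \, l^d \leq C l^{d-2} h^{-d+2}$ using (\ref{eq:int:gradphi}) and the support hypothesis.

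For the matching upper bound I would apply the Berezin-Lieb inequality for the convex function $t \mapsto -t_-$ with coherent states $G_{x,p}(y) = \rho^{-d/2} g((y-x)/\rho) e^{ip\cdot y/h}$, where $g \in C_0^\infty(\R^d)$ is a fixed even function of unit $L^2$-norm and $\rho \in (h, l)$ is an intermediate length scale to be optimized:
\begin{equation*}
\tr(\phi H_0 \phi)_- \, \leq \, \int_{\R^{2d}} \bigl(\langle G_{x,p}, \phi H_0 \phi\, G_{x,p}\rangle\bigr)_- \frac{dx\, dp}{(2\pi h)^d}.
\end{equation*}
Computing the symbol gives $(|p|^2 - 1)(\phi^2 \ast g_\rho^2)(x) + R(x)$ with a $p$-independent remainder $R(x)$ assembled from the position-gradient term $h^2(|\nabla\phi|^2 \ast g_\rho^2)$, the momentum-uncertainty term $h^2(\phi^2 \ast |\nabla g_\rho|^2)$, and a cross term. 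The main obstacle is controlling the cross term without invoking a bound on $\Delta\phi$, which is not part of the hypotheses: a double integration by parts transfers all derivatives from $\phi^2$ onto the fixed profile $g^2$, giving the uniform bound $|R(x)| \leq C h^2/\rho^2$. Integrating $(\cdot)_-$ in $p$ and expanding to first order in $R/(\phi^2 \ast g_\rho^2)$ then produces the leading term $L_d h^{-d}\int \phi^2 \, dx$ plus an error of order $h^{-d+2} l^d/\rho^2$; taking $\rho$ of order $l$, which is admissible because the case $l < h$ is trivial by direct comparison to the leading term, yields the claimed $O(l^{d-2} h^{-d+2})$.
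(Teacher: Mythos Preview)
Your reduction to $H_0$ and your first argument (the variational lower bound on $\tr(\phi H_0\phi)_-$ using the spectral projection $\gamma=\chi_{(-\infty,0)}(H_0)$ together with the IMS identity) are correct and match the paper's approach; the only cosmetic difference is that the paper cuts off $\gamma$ by the characteristic function $\chi$ of a ball containing $\operatorname{supp}\phi$ to make it trace class, which does not change the computation since $\chi\phi=\phi$.

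The genuine gap is in your second argument. The function $t\mapsto -t_-=\min(t,0)$ is \emph{concave}, not convex, and in any case the Berezin--Lieb inequality with the \emph{lower} (covariant) symbol goes the other way: for a unit vector $G$ one has $\langle G,A_-G\rangle\geq(\langle G,AG\rangle)_-$ by operator Jensen, and integrating against the resolution of the identity yields
\[
\tr(\phi H_0\phi)_-\;\geq\;\int_{\R^{2d}}\bigl(\langle G_{x,p},\phi H_0\phi\,G_{x,p}\rangle\bigr)_-\,\frac{dx\,dp}{(2\pi h)^d}\,,
\]
the reverse of what you wrote. Thus your coherent-state computation produces another \emph{lower} bound on the trace, not the required upper bound, and the proposal as it stands proves only one of the two inequalities.

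The fix is shorter than the argument you attempted and is exactly what the paper does (its Lemma~\ref{lem:berezin}): from the operator inequality $\phi H_0\phi\geq -\phi(H_0)_-\phi$ one gets directly
\[
\tr(\phi H_0\phi)_-\;\leq\;\tr\bigl(\phi(H_0)_-\phi\bigr)\;=\;L_d\,h^{-d}\int_{\R^d}\phi^2(x)\,dx\,,
\]
with no error term at all. No coherent states or intermediate scale $\rho$ are needed for this direction.
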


Close to the boundary of $\Omega$, more precisely, if the support of $\phi$ intersects the boundary, a term of order $h^{-d+1}$ appears:

\begin{proposition}
\label{pro:boundary}
Assume that $\phi \in C_0^\infty(\R^d)$ is supported in a ball of radius $l > 0$ intersecting the boundary of $\Omega$ and that inequality (\ref{eq:int:gradphi}) is satisfied.

Then for all $0< l \leq 1$ and $0 < h \leq 1$ the estimate
$$
\left| \tr \lk \phi H_{\Omega} \phi \rk_- - \Le \int_\Omega \phi^2(x) \, dx \, h^{-d} + \Lz \int_{\partial \Omega} \phi^2(x) d\sigma(x) \, h^{-d+1} \right| \, \leq \, r(l,h)
$$
holds. Here $d\sigma$ denotes the $d-1$-dimensional volume element of $\partial \Omega$ and the remainder satisfies
$$
r(l,h) \, \leq \, C \lk \frac{l^{d-2}}{h^{d-2}} + \frac{l^{2\alpha+d-1}}{h^{d-1}} + \frac{l^{d+\alpha}}{h^d} \rk 
$$
with a constant depending  on $\Omega$,  $\| \phi \|_\infty$ and the constant in (\ref{eq:int:gradphi}).
\end{proposition}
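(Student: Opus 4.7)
The strategy is to flatten the boundary locally by a $C^{1,\alpha}$-diffeomorphism, reduce the trace to the flat half-space $\R^d_+$, and evaluate it explicitly by reflection. Since $\phi$ is supported in a ball that meets $\partial\Omega$, pick $x_0\in\partial\Omega$ in that ball. After a rigid motion, $\partial\Omega$ is locally the graph $y_d=f(y')$ of a function $f\in C^{1,\alpha}$ with $f(0)=0$ and $\nabla f(0)=0$, so that $\|\nabla f\|_\infty\le Cl^\alpha$ and $\|f\|_\infty\le Cl^{1+\alpha}$ on a ball of radius comparable to $l$. The map $\Phi(y',y_d)=(y', y_d+f(y'))$ has unit Jacobian and sends $\R^d_+$ diffeomorphically onto $\Omega$ locally, so the induced unitary $U\colon L^2(\Omega)\to L^2(\R^d_+)$ yields $\tr(\phi H_\Omega\phi)_-=\tr(\tilde\phi\,\tilde H\,\tilde\phi)_-$, with $\tilde\phi=\phi\circ\Phi$ and $\tilde H=-h^2\nabla\cdot g^{-1}(y)\nabla-1$ subject to a Dirichlet condition on $\{y_d=0\}$, the inverse metric satisfying $g^{-1}-\mathrm{id}=O(\nabla f)$ on the support of $\tilde\phi$.

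Next I compare $\tilde H$ with the flat operator $H_+=-h^2\Delta-1$ on $L^2(\R^d_+)$. A min-max argument based on $|\tilde H-H_+|\le Cl^\alpha(-h^2\Delta)$ bounds $|\tr(\tilde\phi\tilde H\tilde\phi)_--\tr(\tilde\phi H_+\tilde\phi)_-|$ by $O(l^{d+\alpha}h^{-d}+l^{d-1+2\alpha}h^{-d+1})$, where the improved $l^{2\alpha}$ gain for the boundary contribution uses $\nabla f(0)=0$ and a Taylor expansion of the metric perturbation near $\partial\Omega$. For the flat half-space trace, the reflection principle gives the kernel of $(H_+)_-$ as $K^+(x,y)=K^{\mathrm{full}}(x,y)-K^{\mathrm{full}}(x,\bar y)$ with $\bar y=(y',-y_d)$, where $K^{\mathrm{full}}$ is the $\R^d$-kernel of $(-h^2\Delta-1)_-$. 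The IMS identity $\tilde\phi H_+\tilde\phi=\tfrac12\{\tilde\phi^2,H_+\}+h^2|\nabla\tilde\phi|^2$ together with a commutator estimate replaces $\tr(\tilde\phi H_+\tilde\phi)_-$ by $\int_{\R^d_+}\tilde\phi^2(y)K^+(y,y)\,dy$ up to $O(l^{d-2}h^{-d+2})$. The full-space diagonal contributes $L_d h^{-d}\int\tilde\phi^2$. After rescaling $y_d=ht$ and expanding $\tilde\phi^2(y',ht)=\tilde\phi^2(y',0)+O(ht/l)$, the reflected piece contributes $h^{-d+1}\int\tilde\phi^2(y',0)\,dy'\int_0^\infty N(t)\,dt$ with $N(t)=(2\pi)^{-d}\int_{|\eta|<1}(1-|\eta|^2)\cos(2t\eta_d)\,d\eta$; the distributional identity $\int_0^\infty\cos(2t\eta_d)\,dt=\tfrac\pi2\delta(\eta_d)$ then gives $\int_0^\infty N(t)\,dt=\tfrac14 L_{d-1}$, producing the surface term.

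Volume preservation of $\Phi$ gives $\int_{\R^d_+}\tilde\phi^2\,dy=\int_\Omega\phi^2\,dx$, and the discrepancy $\int\tilde\phi^2(y',0)\,dy'-\int_{\partial\Omega}\phi^2\,d\sigma=O(l^{d-1+2\alpha})$ of surface integrals comes from $d\sigma=\sqrt{1+|\nabla f|^2}\,dy'=(1+O(l^{2\alpha}))dy'$. Assembling these estimates yields the stated asymptotics with remainder $r(l,h)$. The principal technical obstacles are the commutator estimate that passes from $\tr(\tilde\phi H_+\tilde\phi)_-$ to the diagonal integral of $K^+$ with cost only $O(l^{d-2}h^{-d+2})$, and the refinement in the comparison step that yields $l^{2\alpha}$ (not merely $l^\alpha$) for the boundary remainder; both demand exploiting the specific structure of the boundary parameterization rather than naive norm bounds on the metric perturbation.
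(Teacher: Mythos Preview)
Your overall architecture---straighten the boundary via $y_d \mapsto y_d - f(y')$, compare to the half-space operator $H^+$, and evaluate the half-space trace through the reflected kernel---is exactly the paper's. Two points of divergence are worth noting.

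First, the ``refinement that yields $l^{2\alpha}$ (not merely $l^\alpha$)'' in the operator comparison is a non-issue. The paper obtains only $|\tr(\phi H_\Omega\phi)_- - \tr(\tilde\phi H^+\tilde\phi)_-| \le Cl^{d+\alpha}h^{-d}$ from the metric perturbation, using nothing beyond $|g^{-1}-\mathrm{id}|=O(l^\alpha)$ and the Berezin--Lieb bound of Lemma~\ref{lem:berezin}. The $l^{d-1+2\alpha}h^{-d+1}$ contribution to $r(l,h)$ comes entirely from the surface-measure discrepancy $\sqrt{1+|\nabla f|^2}-1=O(l^{2\alpha})$, which you already record at the end. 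No Taylor expansion of the metric near $\partial\Omega$ is needed, so one of your two ``principal technical obstacles'' is not an obstacle at all.

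Second, for the half-space step the paper does \emph{not} use an IMS/commutator argument to pass from $\tr(\tilde\phi H^+\tilde\phi)_-$ to $\int\tilde\phi^2 K^+(y,y)\,dy$. The IMS identity relates $\phi H^+\phi$ to $\tfrac12\{\phi^2,H^+\}+h^2|\nabla\phi|^2$, but $(\cdot)_-$ is nonlinear, so this does not by itself give the comparison you want; making a commutator argument rigorous here would require regularizing $t\mapsto t_-$ and controlling the resulting errors. The paper instead proceeds purely variationally: the easy inequality $\tr(\phi H^+\phi)_- \le \tr(\phi(H^+)_-\phi)$ gives one side for free, and the reverse is obtained by plugging the explicit trial density matrix $\gamma=\chi(H^+)_-^0\chi$ into $\tr(\phi H^+\phi)_- \ge -\tr(\gamma\phi H^+\phi)$ and computing via the sine transform. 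This yields the $O(l^{d-2}h^{-d+2})$ error directly and eliminates your remaining technical obstacle. The paper also makes explicit the integrability of the boundary kernel---namely $\int_0^\infty t\,|N(t)|\,dt<\infty$ via Bessel-function asymptotics---which is what justifies the expansion $\tilde\phi^2(y',ht)=\tilde\phi^2(y',0)+O(ht/l)$ under the integral; you use this implicitly.
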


Based on these propositions we can complete the proof of the main result. 

\begin{proof}[Proof of Theorem \ref{thm:main}]
In order to apply Proposition \ref{pro:boundary} to the operators $\phi_u H_\Omega \phi_u$, we need to estimate $l(u)$ uniformly.
Assume that $u \in \R^d$ satisfies $B_u \cap \partial \Omega \neq \emptyset$. Then we have $d(u) \leq l(u)$, which by definition of $l(u)$ implies
\begin{equation}
\label{eq:int:luup}
l(u) \leq \,  l_0/ \sqrt3 \, .
\end{equation}
In view of (\ref{eq:int:grad}) we can therefore apply Proposition \ref{pro:bulk} and Proposition \ref{pro:boundary} to all functions $\phi_u$, $u \in \R^d$. Combining these results with Proposition \ref{pro:loc} we get
\begin{align*}
& \left| \tr \lk H_{\Omega} \rk_-  - \frac{\Le}{h^d} \int_{\R^d} \int_\Omega \phi_u^2(x) dx \, \frac{du}{l(u)^d} + \frac{L_{d-1}}{4h^{d-1}} \int_{\R^d} \int_{\partial \Omega} \phi_u^2(x) d\sigma(x) \, \frac{du}{l(u)^d} \right| \\
& \, \leq \, C \lk l_0^{-1} h^{-d+2}  + \int_{U_1} l(u)^{-2} \, du \, h^{-d+2} + \int_{U_2} r(l(u),h) l(u)^{-d} \, du \rk \, ,
\end{align*}
where $U_1 = \{ u \in \Omega \, : \, B_u \cap \partial \Omega = \emptyset \}$ and $U_2 = \{ u \in \R^d \, : \, B_u \cap \partial \Omega \neq \emptyset \}$. Now we change the order of integration and by virtue of (\ref{eq:int:unity}) we obtain
\begin{align}
\nonumber
& \left| \tr \lk H_{\Omega} \rk_-  - \Le \, |\Omega| \, h^{-d} + \Lz \, |\partial \Omega| \, h^{-d+1} \right| \\
\label{eq:int:remainders}
& \, \leq \, C \lk l_0^{-1} h^{-d+2} + \int_{U_1} l(u)^{-2} \, du \, h^{-d+2} + \int_{U_2} r(l(u),h) l(u)^{-d} \, du \rk \, .
\end{align}

It remains to estimate the remainder terms.
Note that, by definition of $l(u)$, we have
$$
l(u) \geq \,  \frac 14 \min \lk d(u), 1 \rk \quad \textnormal{and} \quad l(u) \, \geq \frac{l_0}{4} 
$$
for all $u \in \R^d$. Together with $(\ref{eq:int:luup})$  this implies 
\begin{equation}
\label{eq:int:U}
\int_{U_1} l(u)^{-2} du \, \leq \, C l_0^{-1} \quad \textnormal{and} \quad \int_{U_2} l(u)^a du \, \leq \, C l_0^a \int_{\{ d(u) \leq l_0 \} } du \, \leq \, Cl_0^{a+1} 
\end{equation}
for any $a \in \R$.
Inserting these estimates into (\ref{eq:int:remainders}) we find that the remainder terms are bounded from above by a constant times
$$
l_0^{-1} h^{-d+2} + l_0^{2\alpha} h^{-d+1} + l_0^{\alpha+1} h^{-d}  \, .
$$
Finally, we choose $l_0$ proportional to $h^{2/(\alpha+2)}$ and conclude that all error terms in (\ref{eq:int:remainders}) equal $O(h^{-d+1+\alpha/(2+\alpha)})$ as $h \to 0+$. 
\end{proof}

The remainder of the text is structured as follows. In Section \ref{sec:asympt} we analyze the local asymptotics and outline the proofs of Proposition \ref{pro:bulk} and \ref{pro:boundary}. In Section \ref{sec:loc}, we perform the localization and, in particular, prove Proposition \ref{pro:loc}.


\section{Local asymptotics}
\label{sec:asympt}

To prove the propositions we need the following rough estimate, a variant of the Berezin-Lieb-Li-Yau inequality \cite{Berezi72b,Lieb73,LiYau83}.

\begin{lemma}
\label{lem:berezin}
For any $\phi \in C_0^\infty(\R^d)$ and $h > 0$
$$
\tr \lk \phi H_\Omega \phi \rk_- \, \leq \, \Le \int_{\R^d} \phi^2(x) \, dx \, h^{-d} \, .
$$
\end{lemma}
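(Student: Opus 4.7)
The bound is a Berezin-Li-Yau-Lieb type inequality for the sandwiched operator $\phi H_\Omega \phi$, and the plan is to reduce it to an explicit Fourier computation on $\R^d$.

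First I would pass from $\Omega$ to the whole space. For $v\in H_0^1(\Omega)$ extended by zero, $\phi v$ is again supported in $\overline{\Omega}$ and lies in $H^1(\R^d)$, so the quadratic form
$$
\langle v, \phi H_\Omega \phi v\rangle \,=\, h^2 \|\nabla(\phi v)\|_{L^2(\Omega)}^2 - \|\phi v\|_{L^2(\Omega)}^2
$$
agrees with that of $\phi(-h^2\Delta_{\R^d}-1)\phi$ evaluated on the same function regarded as an element of $H^1(\R^d)$. Since the variational principle on $L^2(\R^d)$ allows strictly more trial subspaces than on $L^2(\Omega)$, each min-max eigenvalue can only decrease, and hence
$$
\tr(\phi H_\Omega \phi)_- \,\leq\, \tr\bigl(\phi(-h^2\Delta_{\R^d}-1)\phi\bigr)_- .
$$

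Next I would reduce the right-hand side to a trivial Fourier computation. Write $H:=-h^2\Delta_{\R^d}-1=H_+-H_-$, where by the spectral theorem $H_-$ is the Fourier multiplier by $(1-h^2|\xi|^2)_+$. Since $\phi H_+\phi\geq 0$, we have $\phi H\phi \geq -\phi H_-\phi$ with $\phi H_-\phi\geq 0$. Sandwiching by the spectral projection $P:=\chi_{(-\infty,0)}(\phi H\phi)$ gives
$(\phi H\phi)_- = -P(\phi H\phi)P \leq P(\phi H_-\phi)P \leq \phi H_-\phi$ as operators, whence
$$
\tr(\phi H\phi)_- \,\leq\, \tr(\phi H_-\phi) \,=\, \tr(H_-\phi^2).
$$

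Finally, the translation-invariant operator $H_-$ has constant kernel on the diagonal, equal to $(2\pi)^{-d}\int_{\R^d}(1-h^2|\xi|^2)_+\,d\xi = \Le\, h^{-d}$ after the substitution $\eta=h\xi$. Therefore $\tr(H_-\phi^2) = \int H_-(x,x)\phi(x)^2\,dx = \Le\, h^{-d}\int\phi^2$, which is exactly the claim. No step poses a real obstacle; the argument combines the routine extension-by-zero variational bound with the projection-sandwich inequality $(\phi H\phi)_-\leq \phi H_-\phi$ and a direct computation of the diagonal kernel of $H_-$.
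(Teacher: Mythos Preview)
Your argument is correct and follows exactly the route the paper takes: pass from $H_\Omega$ to the free operator $H_0=-h^2\Delta-1$ on $\R^d$ by the variational principle, then bound $\tr(\phi H_0\phi)_-$ by $\tr(\phi(H_0)_-\phi)$ and evaluate the latter via the explicit Fourier kernel of $(H_0)_-$. The only difference is cosmetic: the paper compresses your two reductions (domain monotonicity and the inequality $(\phi H\phi)_-\leq\phi H_-\phi$) into a single invocation of ``the variational principle for sums of eigenvalues'', whereas you spell both steps out.
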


\begin{proof}
Let us introduce the operator 
$$
H_0 = -h^2 \Delta - 1 \, ,
$$
defined with form domain $H^1(\R^d)$. The variational principle for sums of 
eigenvalues implies $\tr ( \phi H_\Omega \phi )_-  \leq  \tr ( \phi (H_0)_- \phi )_-$. 
Using the Fourier-transform one can derive an explicit expression for the kernel of $(H_0)_-$ and inserting this yields the claim.
\end{proof}

\subsection{Local asymptotics in the bulk}

First we assume $\phi \in C_0^\infty(\Omega)$.  Then we have $\tr \lk \phi H_\Omega \phi \rk_- \, = \, \tr \lk \phi H_0 \phi \rk_-$,
since the form domains of $\phi H_\Omega \phi$ and $\phi H_0 \phi$ coincide. Moreover, by scaling, we can assume $l = 1$. Thus, to prove Proposition \ref{pro:bulk}, it suffices to establish the estimate
$$
\left| \tr \lk \phi H_0 \phi \rk_- - \Le \int_{\R^d} \phi^2(x) \, dx \, h^{-d} \right| \, \leq \, C  h^{-d+2} 
$$
for $h > 0$. 
The lower bound follows immediately from Lemma \ref{lem:berezin}. The upper bound can be derived in the same way as in the proof of Lemma \ref{lem:mod} below. Indeed, by choosing the trial density matrix $\gamma=\chi (H_0)_-^0 \chi$ we find
$$
\tr \lk \phi H_0 \phi \rk_- \, \geq \, \Le \int_{\R^d} \phi^2(x) \, dx - C_d \int_{\R^d} (\nabla \phi)^2(x) \, dx \, h^{-d+2}
$$
and the claim follows.


\subsection{Straightening the boundary}
\label{ssec:str}

Here we transform the operator $H_\Omega$ locally to an operator given on the half-space $\R^d_+ = \{ y \in \R^d \, : \, y_d > 0 \}$. There we define the operator $H^+$ in the same way as $H_\Omega$, with form domain $H_0^1(\R^d_+)$.

Under the conditions of Proposition \ref{pro:boundary} let $B$ denote the open ball of radius $l > 0$, containing the support of $\phi$.
Choose $x_0 \in B \cap \partial \Omega$ and let $\nu_{x_0}$ be the normed inner normal vector at $x_0$. We choose a Cartesian coordinate system such that $x_0 = 0$ and $\nu_{x_0} = (0, \dots, 0, 1)$, and we write $x = (x',x_d) \in \R^{d-1} \times \R$ for $x \in \R^d$.

For sufficiently small $l > 0$ one can introduce new local coordinates near the boundary. Let $D$ denote the projection of $B$ on the hyperplane given by $x_d =0$.
Since the boundary of $\Omega$ is compact and in $C^{1,\alpha}$, there exists a constant $c>0$, such that for $0<l\leq c$ we can find a real function $f \in C^{1,\alpha}$ given on $D$, satisfying
$$
\partial \Omega \cap B \, = \, \left\{ (x',x_d) \, : \, x' \in D , x_d = f(x') \right\} \cap B \, .
$$
The choice of coordinates implies $f(0) = 0$ and $ \nabla f (0) = 0$.
Since $f \in C^{1,\alpha}$ and the boundary of $\Omega$ is compact we can estimate
\begin{equation}
\label{eq:red:fest}
\sup_{x'\in D} |\nabla f(x')| \, \leq \, C \,  l^\alpha \, ,
\end{equation}
with a constant $C >0$ depending only on $\Omega$, in particular independent of $f$.

Now we introduce new local coordinates given by a diffeomorphism $\varphi \, : \, D \times \R \to \R^d$. We set $y_j \, = \, \varphi_j(x) \, = \, x_j$ for $j = 1, \dots, d-1$ and $y_d \, = \, \varphi_d(x) \, = \, x_d - f(x')$.
Note that the determinant of the Jacobian matrix of $\varphi$ equals $1$ and that the inverse of $\varphi$ is defined on $\textnormal{ran} \, \varphi = D \times \R$. 
There we define $\tilde \phi = \phi \circ \varphi^{-1}$ and extend it by zero to $\R^d$, such that $\tilde \phi \in C_0^1(\R^d)$ and $\| \nabla \tilde \phi \|_\infty \leq Cl^{-1}$ holds.

\begin{lemma}
\label{lem:str}
For $0 < l \leq c$ and any $h > 0$ the estimate
\begin{equation}
\label{eq:str:lem1}
\left| \tr ( \phi H_\Omega \phi )_- - \tr ( \tilde \phi H^+ \tilde \phi )_- \right| \, \leq \, C \, l^{d+\alpha} \, h^{-d}
\end{equation}
holds. Moreover, we have 
\begin{equation}
\label{eq:str:lem2}
\int_\Omega \phi^2(x) \, dx  \, = \, \int_{\R^d_+} \tilde \phi^2(y) \, dy
\end{equation}
and 
\begin{equation}
\label{eq:str:lem3}
\left| \int_{\partial \Omega} \phi^2(x) \, d\sigma(x) - \int_{\R^{d-1}} \tilde \phi^2(y',0) \, dy' \right| \, \leq \, C \, l^{d-1+2\alpha} \, .
\end{equation}
\end{lemma}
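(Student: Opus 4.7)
The strategy is to use $y=\varphi(x)$ as a volume-preserving change of variables. The identity (\ref{eq:str:lem2}) is immediate since $\varphi$ maps $\Omega\cap B$ bijectively onto its image in $\R^d_+$ with unit Jacobian and $\tilde\phi$ vanishes off that image. For (\ref{eq:str:lem3}), I would parametrize $\partial\Omega\cap B$ by $x'\mapsto(x',f(x'))$, so the surface element becomes $\sqrt{1+|\nabla f|^2}\,dx'$ while $\tilde\phi(y',0)=\phi(y',f(y'))$; the difference reduces to
\[
\int_D \phi^2(y',f(y'))\bigl(\sqrt{1+|\nabla f(y')|^2}-1\bigr)\,dy',
\]
and $\sqrt{1+t^2}-1\le t^2/2$ together with (\ref{eq:red:fest}) and $|D|\le Cl^{d-1}$ yields the claimed $Cl^{d-1+2\alpha}$ bound.

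The substantive step is (\ref{eq:str:lem1}). Let $U\colon L^2(\Omega)\to L^2(\R^d_+)$ be the partial isometry $(Uv)(y)=v(\varphi^{-1}(y))$ extended by zero. Then $\hat H := U(\phi H_\Omega\phi)U^{\ast}$ is unitarily equivalent to $\phi H_\Omega\phi$ and shares its trace of negative part. Using the chain rule $\partial_{x_j}=\partial_{y_j}-(\partial_j f)\partial_{y_d}$ for $j<d$ and $\partial_{x_d}=\partial_{y_d}$, its quadratic form reads
\[
(u,\hat H u) = h^2\int_{\R^d_+}\bigl[|\nabla(\tilde\phi u)|^2 + R[\tilde\phi u]\bigr]\,dy - \int_{\R^d_+}(\tilde\phi u)^2\,dy,
\]
with $R[w]=-2(\nabla' f\cdot\nabla' w)\,\partial_d w+|\nabla' f|^2(\partial_d w)^2$. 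Cauchy--Schwarz and (\ref{eq:red:fest}) give $|R[w]|\le Cl^\alpha|\nabla w|^2$, so setting $h_\pm^2=(1\pm Cl^\alpha)h^2$ the quadratic forms satisfy $\tilde\phi H^+_{h_+}\tilde\phi\ge\hat H\ge\tilde\phi H^+_{h_-}\tilde\phi$, and since $A\le B$ implies $\tr A_-\ge\tr B_-$,
\[
\tr(\tilde\phi H^+_{h_+}\tilde\phi)_- \le \tr(\phi H_\Omega\phi)_- \le \tr(\tilde\phi H^+_{h_-}\tilde\phi)_-.
\]

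The main obstacle is turning this sandwich into the remainder bound $Cl^{d+\alpha}h^{-d}$ on $|\tr(\phi H_\Omega\phi)_- - \tr(\tilde\phi H^+\tilde\phi)_-|$. I would use the variational formula $\tr A_-=-\inf_{0\le\gamma\le 1}\tr(A\gamma)$: since $\tilde\phi H^+_{h_\pm}\tilde\phi-\tilde\phi H^+\tilde\phi=\pm Cl^\alpha h^2\,\tilde\phi(-\Delta)\tilde\phi$, inserting the respective minimizer $\gamma^{\ast}$ into the comparison of infima yields
\[
\bigl|\tr(\tilde\phi H^+_{h_\pm}\tilde\phi)_- - \tr(\tilde\phi H^+\tilde\phi)_-\bigr| \le Cl^\alpha h^2\,\tr\bigl(\tilde\phi(-\Delta)\tilde\phi\,\gamma^{\ast}\bigr).
\]
From the identity $h^2\tr(\tilde\phi(-\Delta)\tilde\phi\gamma^{\ast})=\tr(\tilde\phi^2\gamma^{\ast})-\tr(\tilde\phi H^+\tilde\phi)_-\le\|\tilde\phi\|_\infty^2\,\tr\gamma^{\ast}$, it suffices to bound the number $N=\tr\gamma^{\ast}$ of negative eigenvalues of $\tilde\phi H^+\tilde\phi$. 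For such an eigenfunction $v$, the function $w:=\tilde\phi v$ lies in $H_0^1$, is supported in $\textnormal{supp}\,\tilde\phi$ (volume $\le Cl^d$), and satisfies $h^2\int|\nabla w|^2<\int w^2$; the Li--Yau/Weyl bound for the Dirichlet Laplacian on a domain of that volume then gives $N\le Cl^d h^{-d}$. Multiplying through produces the desired $Cl^{d+\alpha}h^{-d}$.
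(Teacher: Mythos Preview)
Your proof is correct and follows the same overall route as the paper: the identities (\ref{eq:str:lem2}) and (\ref{eq:str:lem3}) are handled exactly as there, and for (\ref{eq:str:lem1}) both you and the paper compare quadratic forms under the change of variables $y=\varphi(x)$ to obtain a sandwich between $\tilde\phi H^+_{h_\pm}\tilde\phi$ with $h_\pm^2=(1\pm Cl^\alpha)h^2$.

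The one place where you diverge is in extracting the error $Cl^{d+\alpha}h^{-d}$ from that sandwich. The paper writes $-(1-Cl^\alpha)h^2\Delta-1$ as $(-h^2\Delta-1)$ plus $\varepsilon(-\tfrac{h^2}{2}\Delta-1)$ with $\varepsilon=2Cl^\alpha$, uses the subadditivity $\tr(A+B)_-\le\tr A_-+\tr B_-$, and then applies the Berezin--Lieb--Li--Yau inequality (Lemma~\ref{lem:berezin}) directly to bound $\tr(\tilde\phi(-\tfrac{h^2}{2}\Delta-1)\tilde\phi)_-\le Cl^dh^{-d}$. Your route instead bounds the kinetic error $h^2\tr(\tilde\phi(-\Delta)\tilde\phi\,\gamma^{\ast})$ by $\|\tilde\phi\|_\infty^2\,\tr\gamma^{\ast}$ and then controls the counting number $N=\tr\gamma^{\ast}$ via a Weyl/Li--Yau bound on the domain $\operatorname{supp}\tilde\phi\cap\R^d_+$. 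Both arguments are standard and yield the same remainder; the paper's is slightly more economical because Lemma~\ref{lem:berezin} is already in place and no separate counting estimate is needed. One small point of care in your write-up: in the $h_-$ direction the relevant minimizer is the negative spectral projection of $\tilde\phi H^+_{h_-}\tilde\phi$, not of $\tilde\phi H^+\tilde\phi$, so your ``identity'' for $h^2\tr(\tilde\phi(-\Delta)\tilde\phi\,\gamma^{\ast})$ must be rerun with $h_-$ in place of $h$; since $h_-\sim h$ for small $l$ and the counting bound applies equally at scale $h_-$, this causes no difficulty.
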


\begin{proof}
The definition of $\tilde \phi$ and the fact $\textnormal{det} J\varphi = 1$ immediately give (\ref{eq:str:lem2}). Using (\ref{eq:red:fest}) we estimate
$$
\int_{\partial\Omega} \phi^2(x) d\sigma(x)  =  \int_{\R^{d-1}} \tilde \phi^2(y',0) \sqrt{1+|\nabla f|^2 }  dy'  \leq  \int_{\R^{d-1}} \tilde \phi^2(y',0)  dy' +C l^{d-1+2\alpha}
$$
from which (\ref{eq:str:lem3}) follows.

To prove (\ref{eq:str:lem1})
fix $v \in H_0^1(\Omega)$ with support in $\overline{B}$. For $y \in \textnormal{ran} \, \varphi$ put $\tilde v (y) = v \circ \varphi^{-1}(y)$ and extend $\tilde v$ by zero to $\R^d$. Note that $\tilde v$ belongs to $H_0^1(\R^d_+)$.

An explicit calculation shows
$$
\left| (  \tilde v , -\Delta_{\R^d_+} \tilde v ) - ( v, -\Delta_\Omega  v ) \right| \, \leq  C \, l^\alpha \,  (  \tilde v , -\Delta_{\R^d_+} \tilde v ) \, .
$$
Hence, we find
$$
\tr (\phi H_\Omega \phi )_- \, \leq \, \tr ( \tilde \phi  ( -(1-Cl^\alpha ) h^2 \Delta_{\R^d_+} - 1 ) \tilde \phi )_- \, .
$$
Set $\varepsilon = 2C l^\alpha$ and assume $l$ to be sufficiently small, so that $0 < \varepsilon \leq 1/2$ holds. Then
\begin{align*}
\tr (\phi H_\Omega \phi )_- \, & \leq \, \tr (\tilde \phi ( -(1-Cl^\alpha) h^2 \Delta_{\R^d_+} -1 ) \tilde \phi )_-   \\
& \leq \, \tr ( \tilde \phi (  -h^2 \Delta_{\R^d_+} - 1 ) \tilde \phi )_- + \tr ( \tilde \phi (-(\varepsilon - Cl^\alpha ) h^2 \Delta_{\R^d_+} - \varepsilon ) \tilde \phi )_- \\
& \leq \, \tr ( \tilde \phi H^+ \tilde \phi )_- + \varepsilon \, \tr ( \tilde \phi ( - (h^2/2) \Delta_{\R^d_+} -1 ) \tilde \phi )_- \, .
\end{align*}
By Lemma \ref{lem:berezin} we have $\tr ( \tilde \phi ( - (h^2/2) \Delta_{\R^d_+} -1 ) \tilde \phi )_- \leq C l^d h^{-d}$
and we obtain
$$
\tr ( \phi H_\Omega \phi )_- \, \leq \, \tr (\tilde \phi H^+ \tilde \phi )_- + C \, l^{d+\alpha} \, h^{-d} \, .
$$
Finally, by interchanging the roles of $H_\Omega$ and $H^+$, we get an analogous upper bound and the proof of Lemma \ref{lem:str} is complete.
\end{proof}


\subsection{Local asymptotics in half-space}
\label{ssec:mod}

In view of Lemma \ref{lem:str} we can reduce Proposition \ref{pro:boundary} to a statement concerning the operator $H^+$, given on the half-space $\R^d_+$. Indeed, to prove Proposition \ref{pro:boundary}, it suffices to establish the following result.

\begin{lemma}
\label{lem:mod}
Assume that $\phi \in C_0^1(\R^d)$ is supported in a ball of radius $l > 0$  and that (\ref{eq:int:gradphi}) is satisfied. Then for $h > 0$ the estimate
$$
\left| \tr \lk  \phi H^+  \phi \rk_- - \frac{\Le}{h^d} \int_{\R^d_+} \phi^2(x) dx + \frac{L_{d-1}}{4h^{d-1}} \int_{\R^{d-1}} \phi^2(x',0) dx' \right| \, \leq \, C l^{d-2} h^{-d+2} 
$$
holds with a constant depending only on the constant in (\ref{eq:int:gradphi}).
\end{lemma}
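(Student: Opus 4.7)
The plan is to sandwich $\tr(\phi H^+\phi)_-$ between two expressions both equal to $\int_{\R^d_+}\phi^2(x)(H^+)_-(x,x)\,dx$ up to an error of order $l^{d-2}h^{-d+2}$, and then to evaluate this integral asymptotically via the explicit reflection formula for the kernel of $(H^+)_-$.

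For the upper bound on $\tr(\phi H^+\phi)_-$, the operator inequality $\phi H^+\phi \geq -\phi (H^+)_-\phi$ gives at once
\[
\tr(\phi H^+\phi)_- \,\leq\, \tr\bigl(\phi (H^+)_-\phi\bigr) \,=\, \int_{\R^d_+}\phi^2(x)(H^+)_-(x,x)\,dx.
\]
For the matching lower bound I use the variational principle $\tr A_- = -\inf_{0\leq\gamma\leq 1}\tr(A\gamma)$ with trial density matrix $\gamma = P = \mathbf{1}(H^+ < 0)$, together with the IMS identity $\phi H^+\phi = \tfrac{1}{2}(\phi^2 H^+ + H^+\phi^2) + h^2|\nabla\phi|^2$. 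Since $H^+$ and $P$ commute, $\tr(\phi^2 H^+ P) = \tr(H^+\phi^2 P) = -\int\phi^2 (H^+)_-(x,x)\,dx$, hence
\[
\tr(\phi H^+\phi)_- \,\geq\, \int\phi^2(x)(H^+)_-(x,x)\,dx \,-\, h^2\int|\nabla\phi|^2(x)\,P(x,x)\,dx.
\]
The error term is controlled by the uniform pointwise bound $P(x,x)\leq Ch^{-d}$ (immediate from the reflection formula below), $\|\nabla\phi\|_\infty\leq Cl^{-1}$, and $|\mathrm{supp}\,\phi|\leq Cl^d$, giving the wanted $Cl^{d-2}h^{-d+2}$.

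It remains to evaluate $\int\phi^2(x)(H^+)_-(x,x)\,dx$. By the reflection principle (odd extension across $x_d = 0$) the kernel has the form
\[
(H^+)_-(x,y) = h^{-d}\bigl[G((x-y)/h) - G((\tilde x - y)/h)\bigr], \qquad \tilde x = (x',-x_d),
\]
with $G(z) = (2\pi)^{-d}\int_{\R^d}(|\xi|^2-1)_- e^{i\xi\cdot z}\,d\xi$. On the diagonal one reads off $(H^+)_-(x,x) = h^{-d}[L_d - G(0,-2x_d/h)]$, and the $L_d$ contribution immediately produces the bulk term $L_d h^{-d}\int\phi^2$. Substituting $s = 2x_d/h$ and using that $G(0,\cdot)$ is even, the correction becomes
\[
-\,\frac{h^{-d+1}}{2}\int_{\R^{d-1}}\!\int_0^\infty \phi^2(x',hs/2)\,G(0,s)\,ds\,dx'.
\]
The key identity $\int_0^\infty G(0,s)\,ds = L_{d-1}/2$, obtained by rewriting the $s$-integral of the Fourier representation as a $\delta(\xi_d)$ contribution (rigorously via an $e^{-\epsilon s}$ regularization), shows that freezing $\phi^2(x',hs/2)$ at $s = 0$ reproduces exactly the boundary term $-\tfrac{L_{d-1}}{4}h^{-d+1}\int\phi^2(x',0)\,dx'$.

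The remaining step, and the principal technical obstacle, is to bound the residue generated by this replacement. I would integrate by parts in $s$ against the antiderivative $H(s) = \int_s^\infty G(0,t)\,dt$, which absorbs one factor of $h$ into $\partial_{x_d}\phi^2$; the boundary contributions vanish because the integrand vanishes at $s=0$ and $H(\infty)=0$. The residue is then bounded by $\int_{\R^{d-1}}|\partial_{x_d}\phi^2(x',y)|\,dx' \leq Cl^{d-2}$ on each slice times $\int_0^\infty |H(s)|\,ds$. The delicate point is the integrability of $H$: a naive size estimate from the Bessel representation $G(0,s) \propto s^{-(d+1)/2}J_{(d+1)/2}(s)$ of the one-dimensional slice of the Fourier transform of the radial function $(|\xi|^2-1)_-$ gives only $|G(0,s)|\leq C(1+s)^{-(d+2)/2}$, which is borderline in dimension $d = 2$; one additional integration by parts against the oscillation of $G(0,s)$ improves this to $|H(s)|\leq C(1+s)^{-(d+2)/2}$, which is integrable for every $d\geq 2$. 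Combining this with the two-sided reduction yields the claimed remainder $Cl^{d-2}h^{-d+2}$.
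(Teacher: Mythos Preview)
Your approach is essentially the paper's: both sandwich $\tr(\phi H^+\phi)_-$ by the diagonal integral $\int\phi^2(H^+)_-(x,x)\,dx$ (via $\phi(H^+)_-\phi$ from above and the spectral projector as trial density from below), then extract bulk and boundary terms from the explicit half-space kernel and control the residue through a Bessel representation. Two small corrections are worth noting. First, $P=\mathbf 1(H^+<0)$ is not trace class, so the variational step is formal as written; the paper fixes this by taking $\gamma=\chi P\chi$ with $\chi$ the indicator of a ball containing $\mathrm{supp}\,\phi$, which leaves your IMS computation unchanged since $\chi\phi=\phi$. Second, the Bessel index is $d/2+1$, not $(d+1)/2$: integrating out $\xi'$ leaves $(1-\xi_d^2)^{(d+1)/2}$, whose cosine transform is proportional to $s^{-(d/2+1)}J_{d/2+1}(s)$, hence $|G(0,s)|\le C(1+s)^{-(d+3)/2}$. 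With this decay the paper simply uses $|\phi^2(x',th)-\phi^2(x',0)|\le Cth$ together with $\int_0^\infty t\,|G(0,2t)|\,dt<\infty$ for all $d\ge2$, so your extra integration by parts against the oscillation is unnecessary.
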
 

\begin{proof}
On $\R^d_+$ we can rescale $\phi$ and assume $l =1$. In a first step we prove the estimate
\begin{align}
\nonumber
& \left| \textnormal{Tr} \lk \phi H^+ \phi \rk_- - \frac{L_d}{h^d} \int_{\R^d_+} \! \phi^2(x)  dx + \int_{\R^d_+} \! \phi^2(x) \! \int_{\R^d} \! \cos (2\xi_d x_d h^{-1}) (  |\xi|^2 - 1 )_-  \frac{d\xi \, dx}{(2\pi h)^d} \right| \\
\label{eq:half:cos}
& \, \leq \, C \, h^{-d+2} \, .
\end{align}

To derive a lower bound we use the inequality $\tr \lk \phi H^+ \phi \rk_- \leq  \tr \lk \phi (H^+)_- \phi \rk$
and diagonalize the operator $(H^+)_-$, applying the Fourier-transform in the $x'$-coordinates and the sine-transform in the $x_d$-coordinate. This yields
$$
\tr ( \phi H^+ \phi )_-  \, \leq \, \int_{\R^d_+} \phi^2(x) \int_{\R^d} 2 \sin^2 (\xi_d x_d h^{-1}) \lk  |\xi|^2 - 1 \rk_-  \frac{d\xi \, dx}{(2\pi h)^d} 
$$
and the lower bound follows from the identity
\begin{equation}
\label{eq:half:sincos}
2 \sin^2 (\xi_d x_dh^{-1} ) \, = \, 1 - \cos (2\xi_d x_d h^{-1}) \, .
\end{equation}

To prove the upper bound, define the operator $\gamma = \chi (H^+)_-^0 \chi$ with kernel
$$
\gamma(x,y) \, = \, \frac{2}{(2 \pi h)^d} \, \chi(x) \int_{|\xi|<1} e^{i\xi'(x'-y')/h} \, \sin ( \xi_d x_d h^{-1} ) \sin (\xi_d y_d h^{-1}) d\xi \, \chi(y) \, ,
$$
where $\chi$ denotes the characteristic function of an open ball containing the support of $\phi$. Thus, $\gamma$ is a trace-class operator, satisfying $0 \leq \gamma \leq 1$ and by the variational principle it follows that
\begin{align*}
\lefteqn{\tr (\phi H^+ \phi)_-} \\
\, &\geq \, - \tr (\gamma \phi H^+ \phi)\\
\, &= \, - 2 \int_{|\xi|<1} \lk h^2 \| \nabla  e^{i \xi' \cdot /h}  \sin(\xi_d \cdot h^{-1}) \phi  \|^2_{L^2(\R^d_+)} - \left\| \sin(\xi_d \cdot h^{-1}) \phi  \right\|^2_{L^2(\R^d_+)} \rk \frac{d\xi}{(2\pi h)^d} \\
\, &\geq \,  \int_{\R^d} \lk |\xi|^2 -1 \rk_- \int_{\R^d_+} \phi^2(x) \, 2\sin^2 (\xi_d x_d h^{-1})  \frac{dx \, d\xi}{(2\pi h)^d} - C  h^{-d+2} \, .
\end{align*}
In view of (\ref{eq:half:sincos}) this gives an upper bound and we established (\ref{eq:half:cos}).

We proceed to analyzing the term in (\ref{eq:half:cos}) which contains the cosine. We substitute $x_d = th$ and write
\begin{eqnarray}
\nonumber
\lefteqn{ \int_{\R^d_+} \phi^2(x) \int_{\R^d} \cos (2\xi_d x_d h^{-1}) \lk  |\xi|^2 - 1 \rk_-  \frac{d\xi \, dx}{(2\pi h)^d} } \\
\label{eq:half:rem}
& = & \frac{1}{(2\pi)^d} \int_0^\infty \int_{\R^{d-1}} \phi^2(x',th) dx' \int_{\R^d} \cos(2\xi_d t) \lk  |\xi|^2 - 1 \rk_- \, d\xi \, dt \, h^{-d+1} \, .
\end{eqnarray}
Note that 
\begin{equation}
\label{eq:half:second}
\frac{1}{(2\pi)^d}\int_0^\infty \int_{\R^d} \cos(2\xi_d t) \lk  |\xi|^2 - 1 \rk_- \, d\xi \, dt \, = \, \Lz \, .
\end{equation}
Moreover, in \cite[(9.1.20)]{AbrSte72} it is shown that
\begin{align*}
\int_{\R^d} \cos(2\xi_d t) \lk |\xi|^2-1 \rk_- d\xi  =  C \! \int_0^1 \! \cos(2\xi_d t) (1-\xi_d^2)^{(d+1)/2}d\xi_d =   C \frac{J_{d/2+1}(2t)}{t^{d/2+1}} \, ,
\end{align*}
where $J_{d/2+1}$ denotes the Bessel function of the first kind. We remark that $|J_{d/2+1}(2t)|$ is proportional to $t^{d/2+1}$ as $t \to 0+$ and bounded by a constant times $t^{-1/2}$ as $t \to \infty$, see  \cite[(9.1.7) and (9.2.1)]{AbrSte72}. It follows that
\begin{equation}
\label{eq:half:bessel}
\int_0^\infty t \left| \int_{\R^d} \cos(2\xi_d t) \lk  |\xi|^2 - 1 \rk_- d\xi \right| dt \, \leq \, C \int_0^\infty t^{-d/2} |J_{d/2+1}(2t)| \, dt \, \leq \, C \, .
\end{equation}
In view of (\ref{eq:half:rem}), (\ref{eq:half:second}) and (\ref{eq:half:bessel}) we find 
\begin{align*}
&\left|  \int_{\R^d_+} \phi^2(x) \int_{\R^d} \cos (2\xi_d x_d h^{-1}) \lk  |\xi|^2 - 1 \rk_-  \frac{d\xi \, dx}{(2\pi h)^d} - \frac{L_{d-1}}{4h^{d-1}} \int_{\R^{d-1}} \phi^2(x',0) dx' \right| \\
& \leq \, C h^{-d+2} \, .
\end{align*}
Inserting this into (\ref{eq:half:cos}) proves Lemma \ref{lem:mod}.
\end{proof}
Proposition \ref{pro:boundary} is a consequence of Lemma \ref{lem:str} and Lemma \ref{lem:mod}.


\section{Localization}
\label{sec:loc} 

Here we construct the family of localization functions $(\phi_u)_{u \in \R^d}$ and prove Proposition \ref{pro:loc}. The key idea is to choose the localization depending on the distance to the complement of $\Omega$, see \cite[Theorem 17.1.3]{Hoerma85} and \cite{SolSpi03}.

Fix a real-valued function $\phi \in C_0^\infty(\R^d)$ with support in $\{|x| < 1\}$ and $\| \phi \|_2 = 1$. For $u, x \in \R^d$ let $J(x,u)$ be the Jacobian of the map $u \mapsto (x-u)/l(u)$. We define 
$$
\phi_u(x) \, = \, \phi \lk \frac{x-u}{l(u)} \rk \sqrt{J(x,u)} \, l(u)^{d/2} \, ,
$$
such that $\phi_u$ is supported in $\{ x \, : \, |x-u| < l(u) \}$.
According to \cite{SolSpi03}, the functions $\phi_u$ satisfy (\ref{eq:int:grad}) and (\ref{eq:int:unity}) for all $u \in \R^d$. 

To prove the upper bound in Proposition \ref{pro:loc}, put
$$
\gamma \, = \, \int_{\R^d} \phi_u \, \lk \phi_u H_\Omega \phi_u \rk_-^0 \, \phi_u \, l(u)^{-d} \, du \, .
$$
Obviously, $\gamma \geq 0$ holds and in view of (\ref{eq:int:unity}) also $\gamma \leq 1$. The range of $\gamma$ belongs to $H_0^1(\Omega)$ and by the variational principle it follows that 
$$
- \tr (H_\Omega)_- \, \leq \, \tr \, \gamma H_\Omega \, =  \, -  \int_{\R^d} \tr \lk \phi_u H_\Omega \phi_u \rk_- l(u)^{-d} \, du \, .
$$

To prove the lower bound we make use of the IMS-formula 
$$
\frac 12 \lk f, \phi^2 (-\Delta) f \rk + \frac 12 \lk f, -\Delta \phi^2 f \rk \, = \, \lk f, \phi (-\Delta) \phi f \rk - \lk f, f (\nabla \phi)^2 \rk \, ,
$$
valid for $\phi \in C_0^\infty(\R^d)$ and $f \in H_0^1(\Omega)$. Combining this identity with (\ref{eq:int:unity}) yields
\begin{equation}
\label{eq:loc:ims}
(f, -\Delta f) \, = \, \int_{\R^d} \lk \lk f, \phi_u (-\Delta) \phi_u f \rk - \lk f ,  (\nabla \phi_u)^2 f \rk \rk l(u)^{-d} \, du \, .
\end{equation}
Using (\ref{eq:int:grad}) and (\ref{eq:int:unity}) one can show \cite{SolSpi03}
$$
\int_{\R^d} (\nabla \phi_u)^2(x) l(u)^{-d} \, du \, \leq \, C \int_{\R^d} \phi_{u}^2(x) \, l(u)^{-d-2} \, du \, .
$$
We insert this into (\ref{eq:loc:ims}) and deduce
$$
\tr \lk H_\Omega \rk_- \, \leq \, \int_{\Omega^*} \tr \lk \phi_u \lk - h^2 \Delta - 1 - C h^2 l(u)^{-2} \rk \phi_u \rk_- \, l(u)^{-d} \, du \, ,
$$
where $\Omega^* = \{u \in \R^d \, : \, \textnormal{supp} \phi_u \cap \Omega \neq \emptyset \}$.
To estimate the localization error we use Lemma \ref{lem:berezin}. For any $u \in \R$, let $\rho_u$ be another parameter $0< \rho_u \leq 1/2$ and estimate
\begin{eqnarray*}
\lefteqn{ \textnormal{Tr} \lk \phi_u \lk -h^2\Delta -1 - C h^2  l(u)^{-2} \rk \phi_u \rk_- } \\
& \leq & \textnormal{Tr} \lk \phi_u (-h^2 \Delta -1 ) \phi_u \rk_- + C \, \textnormal{Tr} \lk \phi_u \lk -\rho_u  h^2 \Delta - \rho_u- h^2 l(u)^{-2}  \rk \phi_u \rk_- \\
&\leq& \textnormal{Tr} \lk \phi_u  H_\Omega \phi_u \rk_- + C \, l(u)^d (\rho_u h^2)^{-d/2} \lk \rho_u + h^2 l(u)^{-2} \rk^{1+d/2} \, .
\end{eqnarray*}
With $\rho_u$ proportional to $h^2  l(u)^{-2}$
we find
$$
\tr \lk H_\Omega\rk_- \, \leq \, \int_{\Omega^*} \tr \lk \phi_u  H_\Omega \phi_u \rk_- l(u)^{-d} du + C h^{-d+2} \int_{\Omega^*} l(u)^{-2} du \, .
$$
In view of  (\ref{eq:int:U}) the last integral is bounded by a constant times $l_0^{-1}$ and the proof of Proposition \ref{pro:loc} is complete.

\end{document}